\DeclareMathOperator{\rk}{rk}
\DeclareMathOperator{\Gal}{Gal}
\DeclareMathOperator{\red}{red}
\newcommand{\Q}{\mathbb{Q}}
\newcommand{\Z}{\mathbb{Z}}
\newcommand{\F}{\mathbb{F}}
\newcommand{\OO}{\mathcal{O}}
\newtheorem{theorem}{Theorem}
\newtheorem*{theorem*}{Theorem}
\newtheorem{lemma}[theorem]{Lemma}
\newtheorem*{proposition*}{Proposition}
\newtheorem*{corollary*}{Corollary}
\theoremstyle{definition}
\newtheorem{remark}[theorem]{Remark}
\title{Quadratic points on $X_0(163)$}
\date{\today}
\begin{document}

\begin{abstract}
    We determine all the quadratic points on the genus $13$ modular curve $X_0(163)$, thus completing the answer to a recent question of Banwait, the second-named author, and Padurariu. In doing so, we investigate a curious phenomenon involving a cubic point with complex multiplication on the curve $X_0(163)$. This cubic point prevents us, due to computational restraints, from directly applying the state-of-the-art Atkin--Lehner sieve for computing quadratic points on modular curves $X_0(N)$. To overcome this issue, we introduce a technique which allows us to work with the Jacobian of curves modulo primes by directly computing linear equivalence relations between divisors.
\end{abstract}


\author[Michaud-Jacobs]{Philippe Michaud-Jacobs}
\address{Mathematics Institute, University of Warwick, CV4 7AL, United Kingdom}
\email{p.rodgers@warwick.ac.uk}
\urladdr{\url{https://warwick.ac.uk/fac/sci/maths/people/staff/michaud/}}

\author[Najman]{Filip Najman}
\address{University of Zagreb, Bijeni\v{c}ka Cesta 30, 10000 Zagreb, Croatia}
\email{\url{fnajman@math.hr}}
\urladdr{\url{https://web.math.pmf.unizg.hr/~fnajman/}}


\thanks{P.M. was supported by an EPSRC studentship EP/R513374/1 and has previously used the surname Michaud-Rodgers.
F.N. was supported by the QuantiXLie Centre of Excellence, a project co-financed by the Croatian Government and European Union through the European Regional Development Fund - the Competitiveness and Cohesion Operational Programme (Grant KK.01.1.1.01.0004).}


\subjclass[2020]{11G05, 14G05, 11G18}

\keywords{Modular curves, quadratic points, elliptic curves, symmetric Chabauty, Mordell--Weil sieve, Jacobians}


\maketitle

\section{Introduction}

The problem of determining quadratic points on modular curves $X_0(N)$ has recently received  considerable attention by many authors, with an aim towards classifying the possible isogeny degrees of elliptic curves over quadratic fields, see \cite{akmnov, box21, bruin-najman,najman-vukorepa,ozman-siksek}.

Question~1.3 of \cite{bnp} asks whether one can determine the (finitely many) quadratic points on each of the curves $X_0(91)$, $X_0(125)$, $X_0(163)$, and $X_0(169)$. Answering this question is a key step in computing the possible isogeny degrees of elliptic curves over fixed quadratic fields, which in turn has direct applications to the resolution of Diophantine equations using the modular method. The quadratic points on $X_0(91)$
are determined in \cite[Section~13.6]{vukorepa2022} and the quadratic points on $X_0(125)$ and $X_0(169)$ are determined in \cite[Section~6]{bnp}. The case of $X_0(163)$, a curve of genus $13$, is thus the last remaining case and serves as the chief motivation for writing this note. 

At the time of writing, the state-of-the-art technique for determining quadratic points on modular curves $X_0(N)$ is the Atkin--Lehner sieve (a variant on the classic Mordell--Weil sieve), presented in detail in \cite[Section~3.4]{akmnov}. Due to the existence of a certain complex multiplication (CM) cubic point on the curve $X_0(163)$, it turns out that the Atkin--Lehner sieve, when applied by combining information modulo odd primes $< 41$, is in fact guaranteed to fail for this curve. We provide a complete explanation for this interesting phenomenon in Section~\ref{sec:failure} of this note. The large genus of the curve $X_0(163)$  creates computational issues and prevents us from applying the sieve using primes $\geq 17$ (and certainly from using primes $\geq 41$). This makes this curve particularly difficult to study. To overcome this problem, we work directly with linear equivalence relations between degree $2$ divisors, and thus avoid working with the Jacobian of the curve $X_0(163)$ modulo larger primes.  This allows us to work modulo $41$ and ultimately prove the following result.

\begin{theorem} \label{mainthm}
    The modular curve $X_0(163)$ has nine pairs of quadratic points, all of which are CM points, and each pair of quadratic points is interchanged by the Atkin--Lehner involution. 
    
    In \Cref{table163} below, we have displayed one representative of each pair of quadratic points, its field of definition, its $j$-invariant, and the discriminant of the endomorphism ring of an elliptic curve represented by the point.

    \begin{table}[!ht]
                {\small
        \begin{tabular}{ccccc}
	    Point & Field  & $j$-invariant & CM  \\ [0.3ex]
	    \midrule 
            $P_1$ & $\Q(\sqrt{-2})$ & $8000$ & $-8$  \\ [0.5ex]
            $P_2$ & $\Q(\sqrt{-3})$  & $0$ & $-3$ \\ [0.5ex]
            $P_3$ & $\Q(\sqrt{-3})$  &  $54000$  & $-12$\\  [0.5ex]
            $P_4$ & $\Q(\sqrt{-3})$  &  $-12288000$  & $-27$\\  [0.5ex]
            $P_5$ & $\Q(\sqrt{-7})$ & $-3375$ & $-7$ \\ [0.5ex] 
            $P_6$ & $\Q(\sqrt{-7})$ & $16581375$ & $-28$ \\ [0.5ex] 
            $P_7$ & $\Q(\sqrt{-11})$ & $-32768$ & $-11$ \\ [0.5ex]
            $P_8$ & $\Q(\sqrt{-19})$  & $-884736$ & $-19$ \\ [0.5ex]
            $P_9$ & $\Q(\sqrt{-67})$  & $-147197952000$ & $-67$ \\ [0.5ex]
            \bottomrule
        \end{tabular}
        
    } 
    \caption{All quadratic points on $X_0(163)$.}
    \label{table163}
    \end{table}   
    
\end{theorem}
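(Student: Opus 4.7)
The plan is to split the theorem into two parts: first, exhibit the nine CM pairs as genuine quadratic points; second, prove completeness by an Atkin--Lehner / Mordell--Weil sieve calibrated to cope with the obstruction described in Section~\ref{sec:failure}.

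For the first part I would compute a canonical model of $X_0(163)$ in $\mathbb{P}^{12}$ together with the matrix of $w_{163}$, and then realize each of the nine $j$-invariants in \Cref{table163} as an actual quadratic point on the model. These $j$-invariants come from imaginary quadratic orders (of class number $1$, or of class number $2$ with the appropriate splitting at $163$) that embed in the endomorphism ring of an elliptic curve admitting a $\Q$-rational $163$-isogeny; checking that $w_{163}$ interchanges each pair of Galois-conjugate points is then a direct calculation on the model.

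For the completeness part, every quadratic point on $X_0(163)$ corresponds, via the Abel--Jacobi map $\{P,Q\} \mapsto [P+Q-D_0]$ for a fixed rational degree-$2$ divisor $D_0$, to an element of $J_0(163)(\Q)$. After computing the rank and torsion of $J_0(163)(\Q)$, one would enumerate the finitely many candidate Mordell--Weil classes and eliminate each one by reducing modulo primes of good reduction and testing membership in the image of the Abel--Jacobi map over $\F_p$. The obstruction noted in Section~\ref{sec:failure} shows that any sieve using only odd primes $<41$ is guaranteed to fail, so the decisive step must involve reduction modulo $41$.

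The main obstacle, and the reason a new technique is required, is that the Jacobian $J_0(163)$ has dimension $13$, so directly computing the group structure of $J_0(163)(\F_{41})$ is infeasible with standard machinery. I would therefore not compute $J_0(163)(\F_{41})$ at all; instead, for each candidate divisor $D$ representing a Mordell--Weil class and each effective degree-$2$ divisor $P+Q$ on $X_0(163)_{\F_{41}}$, I would test the linear equivalence $P+Q-D_0 \sim D$ directly, by a Riemann--Roch computation on the curve itself that searches for a rational function with the prescribed divisor. If no such function exists for any pair $(P,Q)$ outside the nine known orbits, the theorem follows. The hard part is organizing the candidate set and the Riemann--Roch computations so that they remain tractable on a curve of genus $13$ over $\F_{41}$; this bookkeeping, rather than any deep new geometric input, is where the substance of the proof lies.
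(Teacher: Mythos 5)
Your key computational idea for the prime $41$ --- refusing to build $J(\F_{41})$ and instead testing the relevant linear equivalences of divisors directly on the curve via Riemann--Roch --- is exactly the technique used in Section~\ref{sec:41}, and your identification of $p=41$ as the decisive prime is correct. However, there is a genuine gap in your completeness argument. You propose to ``enumerate the finitely many candidate Mordell--Weil classes'' of $J_0(163)(\Q)$ and eliminate them by reduction modulo $p$. This presupposes that $J_0(163)(\Q)$ is finite, which it is not: the Atkin--Lehner quotient $X_0^+(163)$ has genus $6$ and its Jacobian has Mordell--Weil rank $6$ (this is precisely why \cite{AABCCKW} must use quadratic Chabauty on it), so $J_0(163)(\Q)$ has positive rank and a plain Mordell--Weil sieve on it cannot be run. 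What is actually finite is $(1-w)J(\Q)$, which has rank $0$ and lands inside the cyclic group $J(\Q)_{\mathrm{tors}}$ of order $27$; the sieve must therefore be run on the composite $(1-w)\circ\iota$ rather than on $\iota$ alone.

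Passing to $(1-w)\circ\iota$ has a cost that your proposal does not account for: this map kills exactly the degree-$2$ divisors that are pullbacks from $X/w$ or are sums of fixed points of $w$, so the sieve can only show that every \emph{other} quadratic point is among the known ones. One must then separately (a) determine \emph{all} rational points of $X_0^+(163)$ --- your first part exhibits nine CM pairs, but proving these are the only pullbacks requires the full list of $11$ rational points on the genus-$6$ rank-$6$ curve $X_0^+(163)$, a nontrivial quadratic Chabauty input imported from \cite{AABCCKW} in Lemma~\ref{quotient_points_lemma} --- and (b) classify the fixed points of $w$ defined over quadratic fields (Lemma~\ref{fixed_points_lemma}). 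Your proposal also omits the symmetric Chabauty (``$p$-adically lonely'') verification needed before the known residue classes may be discarded from the sieve; without it the residue disc of $\tilde c_0+\tilde P_{CM}$ at $p=41$ cannot be excluded, and the classes $\pm[D_t]$ survive. With the $1-w$ projection, the case analysis for pullbacks and fixed points, and the lonely-point checks restored, your plan becomes the paper's proof.
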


In the accompanying code repository (linked below), we have displayed the coordinates of each of these quadratic points on a model for the curve $X_0(163)$ in $\mathbb{P}^{12}$.

We expect that the technique we employ to avoid working with the Jacobian of $X_0(163)$ modulo larger primes is applicable more generally for computing low-degree points on curves using any type of Mordell--Weil sieve. We discuss this further in Remark~\ref{end_rem}.

We now briefly outline the remainder of this note. In Section~\ref{sec:difficulties}, we apply the Atkin--Lehner sieve to the curve $X_0(163)$, making some progress towards proving Theorem~\ref{mainthm}. In Section~\ref{sec:failure}, we provide a complete explanation for the failure of the Atkin--Lehner sieve when applied using primes $<41$. Finally, in Section~\ref{sec:41}, we complete the proof of Theorem~\ref{mainthm}.

The \texttt{Magma} \cite{magma} code used to verify the computations in this paper is available at
\begin{center}
    \url{https://github.com/michaud-jacobs/quadratic-points-X0163}
\end{center}


\subsection*{Acknowledgements} We thank Nikola Ad\v zaga and Samir Siksek for helpful conversations. 


\section{Applying the Atkin--Lehner sieve} \label{sec:difficulties}

In this section we introduce notation and apply the Atkin--Lehner sieve to the curve $X_0(163)$ using small primes. We will briefly recall some key details of the Atkin--Lehner sieve, but for a more thorough treatment we refer the reader to \cite[Section~3.4]{akmnov} and \cite[p.~1806]{najman-vukorepa}. 

Let $X \coloneqq X_0(163)$ and let $J \coloneqq J(X)$ be the Jacobian of $X_0(163)$. Let $w \coloneqq w_{163}$ be the Atkin--Lehner involution on $X_0(163)$, and by abuse of notation, we will also denote by $w$ the induced involutions on the symmetric square $X^{(2)}$ of $X$ and on $J$. We view elements of $X^{(2)}$ as effective divisors of degree $2$. Let $c_0$ and $c_\infty$ denote the cusps of $X$. The curve $X_0(163)$ also has a rational CM point (arising from a rational elliptic curve with a rational $163$-isogeny) and we denote this point by $P_{CM}$. By \cite[Theorem~(I)]{eisenstein} the group $J(\Q)_\mathrm{tors}$ is a cyclic group of order 27, generated by $[c_0-c_\infty]$. We define 
\[ 
    D_\infty \coloneqq c_0+c_\infty \quad \text{ and } \quad D_t \coloneqq c_0-c_\infty.
\] 
Note that $w(D_\infty)=D_\infty$ and $w(D_t)=-D_t$, as $w$ permutes the cusps $c_0$ and $c_\infty$. Let $\iota\colon X^{(2)} \hookrightarrow J$ be the Abel--Jacobi map sending a divisor $D$ to $[D-D_\infty]$. As $X$ is not hyperelliptic, $\iota$ is injective. 

In order to apply the sieve, using the code and terminology of \cite{akmnov}, we start by computing a diagonalised model for $X$ together with a map to the quotient curve $X/w = X_0^+(163)$. This quotient curve has genus $6$ and we will write $\pi: X \rightarrow X/w$ for the natural quotient map. We compute that $\rk (1-w)J(\Q)=0$, a necessary condition to apply the sieve. As explained in \cite[Section~3.4]{akmnov} a successful application of the sieve will determine all quadratic points on $X$ that do not arise as pullbacks of rational points under the quotient map $\pi$ and that are not fixed points of $w$. The following two lemmas describe these two sets of points.

\begin{lemma}\label{quotient_points_lemma}
    The curve $X/w$ has precisely $11$ rational points, $9$ of which give rise to pairs of quadratic points on $X_0(163)$ which are displayed in \Cref{table163} of the introduction.
\end{lemma}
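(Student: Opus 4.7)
The plan is to determine $(X/w)(\Q)$ first, and then to analyse, for each rational point $Q$ on the quotient, the degree-$2$ pullback divisor $\pi^*(Q)$ on $X$.

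For the first step, I would compute (or bound) the Mordell--Weil rank of $J_0^+(163)(\Q)$. If this rank is strictly less than the genus $6$ of $X/w$, classical Chabauty--Coleman suffices. Otherwise, one must carry out a quadratic Chabauty computation of the kind recently developed for Atkin--Lehner quotients of prime-level modular curves, possibly combined with a Mordell--Weil sieve to cut down any residual candidate set. The outcome is a rigorously verified enumeration showing $\#(X/w)(\Q) = 11$.

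For the second step, for each $Q \in (X/w)(\Q)$ the pullback $\pi^*(Q)$ is an effective $w$-invariant degree-$2$ divisor on $X$, and so falls into one of three types: a sum of two distinct rational points swapped by $w$; twice a rational point fixed by $w$ (a ramification point of $\pi$); or a sum of a Galois-conjugate pair of quadratic points interchanged by $w$. Since $X(\Q) = \{c_0, c_\infty, P_{CM}\}$ and $w$ swaps the two cusps, the involution identity forces $w(P_{CM}) = P_{CM}$ (otherwise $w$ would send $P_{CM}$ to one of the cusps, contradicting $w(c_0) = c_\infty$). Thus the cusps and $P_{CM}$ contribute exactly two rational points to $(X/w)(\Q)$, leaving the remaining $9$ points on $X/w$ to each pull back to a conjugate pair of $w$-swapped quadratic points on $X$.

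To match these pairs with \Cref{table163}, I would explicitly compute coordinates of each quadratic point using the equations for $\pi$, read off the field of definition from the splitting field of the associated degree-$2$ divisor, and evaluate the modular $j$-map on $X_0(163)$ to recover the $j$-invariants; the CM discriminants then follow from the standard tabulation of rational CM $j$-invariants. The main obstacle is the rigorous determination of $(X/w)(\Q)$: with genus $6$ and possibly positive Mordell--Weil rank, this step demands quadratic Chabauty or an equivalent modern technique, whereas the subsequent pullback analysis and $j$-invariant matching are essentially routine once the list of rational points on the quotient is in hand.
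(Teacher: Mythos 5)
Your proposal is correct and follows essentially the same route as the paper: the paper simply cites the quadratic Chabauty computation of Adžaga--Arul--Beneish--Chen--Chidambaram--Keller--Wen for the fact that $X_0^+(163)$ (genus $6$, with Mordell--Weil rank equal to the genus, so classical Chabauty indeed does not suffice) has exactly $11$ rational points together with the CM orders of the non-cuspidal ones, whereas you propose to carry out that computation directly. Your pullback analysis --- using $X(\Q)=\{c_0,c_\infty,P_{CM}\}$ to show the cusps and $P_{CM}$ account for exactly two points of $(X/w)(\Q)$ and the remaining nine points each lift to a $w$-swapped Galois-conjugate quadratic pair --- matches the paper's argument.
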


\begin{proof}
    The curve $X/w$ has precisely $11$ rational points by \cite[Section~5.3.1]{AABCCKW}, which also lists each non-cuspidal point's corresponding CM order, and this in turn allows us to obtain the $j$-invariant of the point (we may alternatively directly compute the $j$-invariant of each point using our model for the curve). We pull back each of these points via $\pi$. The rational cusp on $X/w$ gives rise to the cusps $c_0$ and $c_\infty$ on $X$ and the rational point with CM by $\Z\left[\frac{1+\sqrt{-163}}{2}\right]$ gives rise to the rational CM point $P_{CM} \in X(\Q)$. The remaining $9$ points give rise to the pairs of quadratic points displayed in \Cref{table163}.
\end{proof}

Before stating the next lemma, which describes the fixed points of $w$ on $X$, we introduce some further notation. We write $K = \Q(\sqrt{-163})$, $\OO_K=\Z\left[\frac{1+\sqrt{-163}}{2}\right]$ for the ring of integers of $K$,  and $\OO \coloneqq \Z \left[\sqrt{-163}\right]$. We then write $B = \Q(j(\OO))$, which is a non-Galois number field of degree $3$. The algebraic integer $j(\OO)$ may be obtained as a root of the Hilbert class polynomial of $\OO$. Alternatively, $B$ may be constructed as $B = \Q(\alpha)$, for $\alpha$ a root of the simpler polynomial $x^3 - 8x + 10$. The field $H \coloneqq K(\alpha)$ is the ring class field of $\OO$ (see \cite[Theorem~11.1]{cox} for example). 

\begin{lemma}\label{fixed_points_lemma}
    The fixed points of $w$ on $X$ defined over number fields are given by:
    \begin{enumerate}[label = (\roman*)]
        \item the rational fixed point, $P_{CM}$, corresponding to an elliptic curve with CM by $\OO_K$;
        \item each member of the Galois orbit of a cubic fixed point defined over $B$, which we denote $R_{CM}$, corresponding to an elliptic curve with CM by $\OO$. 
    \end{enumerate}
\end{lemma}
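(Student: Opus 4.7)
The plan is moduli-theoretic, combined with a Riemann--Hurwitz count. A non-cuspidal point $(E,C) \in X(\bar{\Q})$ is fixed by $w = w_{163}$ if and only if there exists an isomorphism $(E/C, E[163]/C) \cong (E,C)$, which upon composition with the quotient isogeny $E \to E/C$ yields an endomorphism $\phi \in \mathrm{End}(E)$ with $\deg \phi = 163$ and $\ker \phi = C$. Thus $E$ is CM, and $\phi$ has norm $163$ in the order $\mathrm{End}(E)$. Since $\phi$ satisfies $\phi^2 - t\phi + 163 = 0$ for some $t \in \Z$ with $t^2 < 652$, a direct enumeration of norm-$163$ elements shows that $t=0$ is the only option: $\phi = \pm\sqrt{-163}$, placing the CM field inside $K$ and forcing $\mathrm{End}(E) \in \{\OO_K, \OO\}$. (The cusps $c_0, c_\infty$ are swapped by $w$ and so are not fixed.)

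I would then count the fixed points in two complementary ways. Applying Riemann--Hurwitz to the degree-$2$ quotient $\pi\colon X \to X/w$ with $g(X) = 13$ and $g(X/w) = 6$ shows there are exactly $4$ geometric fixed points. On the moduli side, isomorphism classes of $E$ with $\mathrm{End}(E) \cong \OO'$ are parametrised by $\mathrm{Pic}(\OO')$, and for each such $E$ the subgroup $C$ is unique, since the two norm-$163$ elements $\pm\sqrt{-163}$ have a common kernel. Here $h(\OO_K) = h(-163) = 1$, while the standard class number formula for a non-maximal order (with conductor $2$ and $(\tfrac{-163}{2}) = -1$) gives $h(\OO) = h(-652) = 3$. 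This produces $1+3 = 4$ CM fixed points, matching Riemann--Hurwitz and confirming no further possibilities arise.

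For part (i), the CM-by-$\OO_K$ fixed point has $j(E) = j(\OO_K) \in \Z$, so $E$ admits a $\Q$-rational model; the subgroup $C$ is the kernel of the $\Q$-rational $163$-isogeny on $E$, so the point is precisely $P_{CM}$. For part (ii), the three elliptic curves with CM by $\OO$ have $j$-invariants equal to the three Galois conjugates of $j(\OO)$, which generate $B$, and each such $E$ may be defined over $B$. The subgroup $C = \ker\sqrt{-163}$ is defined over $B$ (not merely over the ring class field $H = K\cdot B$), because the non-trivial element of $\mathrm{Gal}(H/B)$ sends $\sqrt{-163}$ to $-\sqrt{-163}$, preserving the kernel. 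Hence $(E,C)$ gives a point $R_{CM} \in X(B)$, and its $\mathrm{Gal}(\bar{\Q}/\Q)$-orbit consists of the three asserted cubic fixed points. The main subtlety in this plan is this last descent step, ensuring the points live over the cubic field $B$ rather than the sextic field $H$; once resolved via the $\pm\sqrt{-163}$ observation, the rest is a bookkeeping exercise with CM orders and class numbers.
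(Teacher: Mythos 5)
Your route is genuinely different from the paper's: the paper simply quotes Ogg's classical determination of the fixed points of $w_N$ (which for prime $N\equiv 3\bmod 4$ are the $h(-N)+h(-4N)$ CM points for the orders of discriminant $-N$ and $-4N$), with a fallback of computing the fixed locus on the explicit model. You instead reprove the relevant case of Ogg's result from the moduli interpretation plus a Riemann--Hurwitz count, and then handle the field-of-definition question (descent of $R_{CM}$ to the cubic field $B$ rather than the sextic ring class field $H$) by hand via the stability of $\ker\sqrt{-163}$ under $\Gal(H/B)$. That last point is exactly the subtlety worth making explicit, and your treatment of it is correct: the unique subgroup of order $163$ killed by an endomorphism of norm $163$ is $E[\mathfrak{q}]$ for the ramified prime $\mathfrak{q}=(\sqrt{-163})$, which is conjugation-stable, so the pair $(E,C)$ has field of moduli $\Q(j(E))$. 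The class-number bookkeeping ($h(-163)=1$, $h(-652)=3$ via the conductor-$2$ formula with $(\tfrac{-163}{2})=-1$) and the Riemann--Hurwitz count $24-20=4$ are both right.

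There is, however, one step that does not work as written: the claim that ``a direct enumeration of norm-$163$ elements shows that $t=0$ is the only option.'' That enumeration does not close up on its own --- for \emph{every} trace $t$ with $t^2<652$ the imaginary quadratic order $\Z[\phi]$ of discriminant $t^2-652$ contains an element $\phi$ of norm $163$ and trace $t$ (e.g.\ $\phi=(1+\sqrt{-651})/2$ has norm $163$ and trace $1$), and such $\phi$ has cyclic kernel of order $163$ whenever $\gcd(t,163)=1$. What rules these out is not the existence question but the \emph{full} fixed-point condition: the isomorphism $(E/C,E[163]/C)\cong(E,C)$ forces $\phi(E[163])=C=\ker\phi$, hence $\phi^2$ kills $E[163]$ and $\phi^2=163\,u$ for some $u\in\mathrm{Aut}(E)$. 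Away from $j\in\{0,1728\}$ this gives $\phi^2=-163$ (the sign $+$ being impossible for an imaginary quadratic $\phi$), i.e.\ $t=0$; and for $j=0$ or $1728$ one checks $163\,u$ is never a square in $\Z[\zeta_3]$ or $\Z[i]$ (its ideal is $\mathfrak{p}\bar{\mathfrak{p}}$ with $\mathfrak{p}\neq\bar{\mathfrak{p}}$, not the square of an ideal). Equivalently: a point $(E,\ker\phi)$ with $\phi\bar\phi=163$ is sent by $w$ to $(E,\ker\bar\phi)$, so it is fixed only when $\ker\phi=\ker\bar\phi$, i.e.\ $\phi=-\bar\phi$. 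In your write-up this gap is silently absorbed by the Riemann--Hurwitz cross-check --- you exhibit $4$ distinct fixed points and R--H says there are exactly $4$, so the conclusion stands --- but for that logic to carry the proof you must actually \emph{verify} (not just parametrise) that each of the four candidate pairs $(E,\ker\sqrt{-163})$ is fixed by $w$, which follows from $\sqrt{-163}(E[163])=\ker\sqrt{-163}$ as above. With that verification stated and the $t=0$ step either repaired or discarded in favour of the counting argument, the proof is complete.
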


\begin{proof}
    This follows, for example, from \cite[p.~454]{Ogg}. Alternatively, we may also compute the fixed points of $w$ on $X$ using the explicit model of our curve.
\end{proof}

We have displayed the coordinates of the fixed points of $w$ on our model for $X$ in the accompanying code repository. The $j$-invariant of the point $P_{CM}$ is given by $-262537412640768000$ and the  $j$-invariant of the point $R_{CM}$ is in $B \backslash \Q$ and is displayed in the accompanying code repository.

We are now in a position to apply the sieve. We first briefly recall the setup. For any object $O$ (point, divisor, function, etc.), we will denote by $\red_p(O)$ or $\widetilde O$ its reduction modulo $p$, where $p$ will always be clear from the context. For a given prime $p$ of good reduction for the curve $X$ we have the following commutative diagram:
\begin{equation*} 
    \begin{tikzcd}[sep = large]
    X^{(2)}(\Q) \arrow[hook]{r}{\iota}  \arrow{d}{\red_p} &  J(\Q)  \arrow{d}{\red_p} \arrow{r}{1-w} & J(\Q)_\mathrm{tors} \arrow[hook]{d}{\red_p}
    \\ 
    X^{(2)}(\mathbb{F}_p) \arrow{r}{\tilde{\iota}} & J(\mathbb{F}_p) \arrow{r}{1-\tilde{w}} &  J(\F_p)
\end{tikzcd}\end{equation*}

We now introduce an important definition. We say that a point $Q \in X^{(2)}(\Q_p)$ is \textbf{$p$-adically lonely} if for any point $R \in X^{(2)}(\Q_p)$ satisfying $\mathrm{red}_p(R) = \mathrm{red}_p(Q)$ we have $R = Q$. We may attempt to prove that a point is $p$-adically lonely by applying a symmetric Chabauty criterion, as in \cite[Section~3.4]{akmnov}, although this is not guaranteed to prove that a $p$-adically lonely point is in fact $p$-adically lonely.

We define $H_p \subseteq X^{(2)}(\F_p)$ to be the set of mod $p$ reductions of the known points in $X^{(2)}(\Q)$ that we can prove are $p$-adically lonely. The known points (divisors) in our case are those supported on $\{c_\infty, c_0, P_{CM} \}$ and those appearing in \Cref{table163}. We define the set $S_p$ to be $X^{(2)}(\F_p) \backslash H_p$.
We then define (as in \cite[Section~3.4]{akmnov})
\begin{equation}\label{W_p}
    W_p \coloneqq \red_p^{-1} \big( (1-\tilde{w}) \circ \tilde{\iota} )(S_p) \big) \subseteq J(\Q)_\mathrm{tors}. 
\end{equation}

Thanks to Lemmas~\ref{quotient_points_lemma} and \ref{fixed_points_lemma}, the contents of \cite[Section~3.4]{akmnov} tells us that in order to prove Theorem~\ref{mainthm}, it would suffice to show that 
\[
    \bigcap_{p \in \mathcal{P}} W_p=\{0\} \subseteq J(\Q)_\mathrm{tors}, 
\] 
for a set of primes $\mathcal{P}$ of good reduction for $X$. We say that the sieve \textit{fails} if this condition is not satisfied. As a first step towards proving Theorem~\ref{mainthm} we applied the sieve with the primes $3$ and $5$ to obtain the following result.

\begin{lemma}\label{lem:35}
    Let  $\mathcal{P} = \{ 3,5 \}$. Then  
    \[ 
        \bigcap_{p \in \mathcal{P}}W_p=\{0, [D_t], -[D_t] \} \subset J(\Q)_\mathrm{tors}. 
    \]
\end{lemma}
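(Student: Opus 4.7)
The plan is to carry out the Atkin--Lehner sieve computation defining $W_3$ and $W_5$ explicitly on the diagonalised model of $X = X_0(163)$ and then take their intersection inside $J(\Q)_{\text{tors}} \cong \Z/27\Z$. Since the excerpt has already fixed all of the structural data (the model, the map $\pi$, the known degree $2$ divisors, and the generator $[D_t]$ of $J(\Q)_{\text{tors}}$), what remains is a finite arithmetic computation modulo each of $p = 3$ and $p = 5$, to be performed in \texttt{Magma}.

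For each $p \in \{3,5\}$, I would first enumerate $X^{(2)}(\F_p)$ as effective degree $2$ divisors on $\widetilde{X}/\F_p$, e.g.\ by iterating over unordered pairs of $\F_p$-points together with the Frobenius-stable closed points of degree $2$. Simultaneously, I would compute the reductions of our known divisors: those supported on $\{c_0, c_\infty, P_{CM}\}$, and the $9$ divisors $P_i + w(P_i)$ coming from \Cref{table163}. For each such known divisor $Q$, I would attempt to verify $p$-adic loneliness using the symmetric Chabauty criterion of \cite[Section~3.4]{akmnov}; because $\rk(1-w)J(\Q) = 0$ one has access to the full space of regular differentials annihilating the relevant part of the Mordell--Weil group, so the criterion is expected to succeed at each of these reductions for $p = 3,5$. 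This determines $H_p$, and hence $S_p$, as the complement in $X^{(2)}(\F_p)$ of the confirmed-lonely reductions.

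Having access to the Jacobian $J(\F_p)$ of $\widetilde{X}/\F_p$ through standard divisor arithmetic, I would then push $S_p$ forward through $(1 - \tilde{w}) \circ \tilde{\iota}$, and compute $W_p \subseteq J(\Q)_{\text{tors}} \cong \Z/27\Z$ by determining which of the $27$ multiples of $\red_p([D_t])$ land in this image. Intersecting $W_3$ and $W_5$ and matching the surviving residue classes with $\{0, [D_t], -[D_t]\}$ completes the verification.

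The main obstacle is computational rather than conceptual, since this portion of the sieve is structurally identical to the setup of \cite{akmnov, najman-vukorepa}: the curve sits in $\mathbb{P}^{12}$ and has genus $13$, so enumerating $X^{(2)}(\F_5)$ and performing divisor arithmetic in $J(\F_5)$ is delicate, and one must check symmetric Chabauty carefully at each known reduction (in particular near $P_{CM}$, whose behaviour is foreshadowed by the cubic CM point that will drive the failure analysis of \Cref{sec:failure}). The fact that three residues modulo $27$ survive the intersection, rather than one, is precisely the expected signal that these two primes alone cannot close the sieve and that a further prime (ultimately $p = 41$) will be needed in \Cref{sec:41}.
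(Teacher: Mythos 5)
Your proposal matches the paper's approach: the lemma is proved by exactly this finite \texttt{Magma} computation, running the Atkin--Lehner sieve machinery of \cite[Section~3.4]{akmnov} at $p=3$ and $p=5$ and intersecting the resulting subsets of $J(\Q)_{\mathrm{tors}} \cong \Z/27\Z$. The only minor inaccuracy is your expectation that symmetric Chabauty certifies loneliness at every known reduction for both primes --- in fact it fails at $\tilde{c}_0+\tilde{P}_{CM}$ and $\tilde{c}_\infty+\tilde{P}_{CM}$ for $p=3$ --- but this does not affect your procedure, since those divisors then simply land in $S_3$ and the computed intersection is still $\{0,[D_t],-[D_t]\}$.
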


We note that $\#W_3 = 5$. We also tested the sieve using $\mathcal{P} = \{3,5,7,11,13\}$, but we still found that $[D_t], [-D_t] \in \bigcap_{p \in \mathcal{P}}W_p$. We were not able to use primes larger than $13$ due to both computational time and memory restraints, although we will shortly see that even using primes up to $37$ would not be of any help.


\section{The failure of the Atkin--Lehner sieve for small primes} \label{sec:failure}

In this section we see exactly why the Atkin--Lehner sieve fails to prove Theorem~\ref{mainthm} when applied using primes $<41$. Although the contents of this section does not form part of the proof of Theorem~\ref{mainthm}, it illustrates many subtleties of the Atkin--Lehner sieve, and obstructions to successful applications of Mordell--Weil sieves more generally. Perhaps more importantly for our purposes, the contents of this section will guide us towards the need for incorporating the prime $p = 41$ into our computations, ultimately allowing us to prove Theorem~\ref{mainthm} in Section~\ref{sec:41}.

For the moment, we fix $p \neq 163$ to be an odd prime, and we make two important observations. We first note that 
\begin{equation}\label{D_t_eq}
    ((1-w) \circ \iota)(c_0+P_{CM})=[D_t] \quad \text{and} \quad 
    ((1-w) \circ \iota)(c_\infty+P_{CM})=-[D_t]. 
\end{equation}
Secondly, we see that 
\begin{equation}\label{equally_lonely}
    c_0+P_{CM} \text{ is $p$-adically lonely } \Longleftrightarrow \; c_\infty+P_{CM} \text{ is $p$-adically lonely.}
\end{equation} 
Indeed, if $Q \in X^{(2)}(\Q_p)$ satisfies $\tilde{Q} = \tilde{c}_0 + \tilde{P}_{CM}$ with $Q \neq c_0 + P_{CM}$ (so that $c_0+P_{CM}$ is not $p$-adically lonely) then $w(Q) \neq c_\infty + P_{CM}$ and $\tilde{w}(\tilde{Q}) = \tilde{c}_\infty +\tilde{P}_{CM}$ (so $c_\infty +P_{CM}$ is also not $p$-adically lonely). These two observations will help us prove the following lemma.

\begin{lemma}\label{two_cond_lemma}
    Suppose one of the following two conditions holds:
    \begin{enumerate}[label = (\roman*)]
        \item the points $c_0 +P_{CM}$ and $c_\infty + P_{CM}$ are not $p$-adically lonely; 
        \item $\tilde{w}$ has a fixed point in $X(\F_p)$ other than the point $\tilde{P}_{CM}$.
    \end{enumerate}
    Then $[D_t], -[D_t] \in W_p$.
\end{lemma}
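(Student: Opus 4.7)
The plan is to exhibit, in each of the two cases, a divisor $E\in S_p$ whose image under $(1-\tilde w)\circ\tilde\iota$ equals $\red_p([D_t])$, together with an analogous divisor whose image equals $-\red_p([D_t])$. Given such divisors, the definition of $W_p$ in \eqref{W_p} combined with commutativity of the reduction diagram immediately yields $\pm[D_t]\in W_p$.

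For case (i), I would take $E=\widetilde{c_0+P_{CM}}$ for $[D_t]$ and $E=\widetilde{c_\infty+P_{CM}}$ for $-[D_t]$. The equalities of their images under $(1-\tilde w)\circ\tilde\iota$ with $\pm\red_p([D_t])$ follow at once from \eqref{D_t_eq} by reducing modulo $p$. The hypothesis in (i) guarantees that neither $c_0+P_{CM}$ nor $c_\infty+P_{CM}$ contributes its own reduction to $H_p$; to deduce that these reductions actually lie in $S_p$ I would run through the short enumerable list of the remaining known rational divisors (namely $2c_0$, $2c_\infty$, $c_0+c_\infty$, $2P_{CM}$, and the nine quadratic pairs from \Cref{table163}) and rule out that any of them reduces to the same element of $X^{(2)}(\F_p)$.

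For case (ii), let $\tilde R$ denote a fixed point of $\tilde w$ on $X(\F_p)$ distinct from $\tilde P_{CM}$. I would take $E=\tilde R+\tilde c_0$ for $[D_t]$ and $E=\tilde R+\tilde c_\infty$ for $-[D_t]$. Using $\tilde w(\tilde R)=\tilde R$ together with the fact that $\tilde w$ swaps $\tilde c_0$ and $\tilde c_\infty$, a direct computation in $J(\F_p)$ yields
\[
((1-\tilde w)\circ\tilde\iota)(\tilde R+\tilde c_0)=[\tilde c_0-\tilde c_\infty]=\red_p([D_t]),
\]
and the analogous identity $((1-\tilde w)\circ\tilde\iota)(\tilde R+\tilde c_\infty)=-\red_p([D_t])$. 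To conclude that these two divisors lie in $S_p$, the hypothesis $\tilde R\neq\tilde P_{CM}$, combined with the fact that $\tilde R$ is a $\tilde w$-fixed point (so in particular $\tilde R\neq\tilde c_0,\tilde c_\infty$ unless these happen to coincide) and a check against the same short list of known rational divisors, rules out any reduction coincidence placing $E$ in $H_p$.

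The main obstacle I expect is the bookkeeping step verifying $E\notin H_p$: since $H_p$ consists of \emph{all} mod $p$ reductions of known provably $p$-adically lonely rational divisors, the non-$p$-adic loneliness of a single divisor (or the distinctness of a single reduction) is not by itself enough to place a given class in $S_p$, and one must enumerate the known divisors and rule out coincidental reduction collisions. Fortunately this list is short, so the check is entirely finite and routine.
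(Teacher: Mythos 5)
Your overall strategy---exhibiting divisors in $S_p$ whose images under $(1-\tilde{w})\circ\tilde{\iota}$ equal $\pm\red_p([D_t])$---is the same as the paper's, and your choice of divisors and the computation of their images are correct in both cases. Case (i) is essentially fine, though the enumeration you append is unnecessary: if any known divisor $D \neq c_0+P_{CM}$ reduced to $\tilde{c}_0+\tilde{P}_{CM}$, then $c_0+P_{CM}$ itself would witness that $D$ is not $p$-adically lonely, so under hypothesis (i) no known divisor reducing to $\tilde{c}_0+\tilde{P}_{CM}$ can be provably lonely and this class can never enter $H_p$; no check is needed.

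The genuine gap is in case (ii), at the step placing $\tilde{c}_0+\tilde{R}$ and $\tilde{c}_\infty+\tilde{R}$ in $S_p$. You propose to rule out any known divisor reducing to $\tilde{c}_0+\tilde{R}$ by a ``finite and routine'' check, but the lemma is stated for an arbitrary odd prime $p\neq 163$ and is invoked in Lemma~\ref{lem:ram} for infinitely many primes, so a per-prime computation cannot prove it; moreover nothing prevents such a coincidence from occurring (a quadratic pair $P_i+\overline{P}_i$ from \Cref{table163} could a priori reduce to $\tilde{c}_0+\tilde{R}$), and in that event your argument gives nothing. What must actually be shown is that any known divisor $D$ reducing to $\tilde{c}_0+\tilde{R}$ cannot be $p$-adically lonely, so that it does not place this class in $H_p$. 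The paper does this with a lifting argument: by \cite[Proposition~4.3]{ozman} the $\tilde{w}$-fixed point $\tilde{R}$ lifts to a $w$-fixed point $R\in X(\Q_p)$, so $c_0+R\in X^{(2)}(\Q_p)$ also reduces to $\tilde{c}_0+\tilde{R}$; if $D$ were lonely this would force $D=c_0+R$, making $R$ a $w$-fixed point defined over a number field of degree at most $2$, hence $R=P_{CM}$ by Lemma~\ref{fixed_points_lemma}, contradicting $\tilde{R}\neq\tilde{P}_{CM}$. This lifting step, combined with the classification of the algebraic fixed points of $w$, is the idea missing from your proposal.
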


\begin{proof}
    If (i) holds, then $\tilde{c}_0 + \tilde{P}_{CM}$ and $\tilde{c}_\infty +\tilde{P}_{CM}$ lie in the set $S_p$, and from the definition of $W_p$ in (\ref{W_p}) combined with observation (\ref{D_t_eq}), we see that $[D_t], -[D_t] \in W_p$.

    Suppose instead that (ii) holds, and write $Q \in X(\F_p)$ for the fixed point of $\tilde{w}$ with $Q \neq \tilde{P}_{CM}$. Since $Q$ is a fixed point of $w$, similarly to (\ref{D_t_eq}), we have that 
    \[ 
        ((1-\tilde{w}) \circ \tilde{\iota})(\tilde{c}_0+Q)=\mathrm{red}_p([D_t]) \quad \text{and} \quad 
        ((1-\tilde{w}) \circ \tilde{\iota})(\tilde{c}_\infty+Q)=\mathrm{red}_p(-[D_t]). 
    \]  
    By considering the definition of $W_p$ in (\ref{W_p}), we see that in order to prove the lemma, it will suffice to show that $\tilde{c}_0+Q, \; \tilde{c}_\infty+Q \in S_p$. Suppose the contrary, and assume that $\tilde{c}_0+Q \notin S_p$ (the case $\tilde{c}_\infty+Q \notin S_p$ being identical). From the definition of the set $S_p$, there must therefore exist a known degree $2$ divisor $D = D_1 + D_2 \in X^{(2)}(\Q)$ such that $\tilde{D} = \tilde{c_0} + Q$, and such that $D$ is not $p$-adically lonely. Since $D$ is a \textit{known} divisor in $X^{(2)}(\Q)$, it will either be supported on the set $\{c_0, c_\infty, P_{CM} \}$, or be one of the quadratic points in \Cref{table163} of the introduction.
    
    By \cite[Prosposition~4.3]{ozman}, we have that $Q = \tilde{R}$ for some $R \in X(\Q_p)$ that is a fixed point of $w$. Then $c_0 + R$ and $D$ both reduce to $\tilde{c}_0 + Q$, but since $D$ is $p$-adically lonely, this forces $D = c_0 + R$, so $D_1 = R$ or $D_2 = R$. However, by Lemma~\ref{fixed_points_lemma}, this forces $D_1 = P_{CM}$ or $D_2 = P_{CM} $, a contradiction since $Q \neq \tilde{P}_{CM}$ by assumption.
\end{proof}

We will now see why for certain primes $p$, and in particular for all primes $p < 41$, one of the two conditions in Lemma~\ref{two_cond_lemma} always holds, thus guaranteeing the failure of the sieve when using such primes. We first recall from Section~\ref{sec:difficulties} the notation $K = \Q(\sqrt{-163})$, $\OO = \Z[\sqrt{-163}]$, and $B = \Q(j(\OO))$.

\begin{lemma} \label{lem:ram}
    Let $p \neq 163$ be an odd prime that is not inert in the field $B$. Then one of the two conditions of Lemma~\ref{two_cond_lemma} holds, and consequently $[D_t], -[D_t] \in W_p$. In particular, this holds for any odd prime $p$ that is inert in $K$, which includes all odd primes $<41$.
\end{lemma}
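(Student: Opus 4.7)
The plan is to exploit the cubic fixed point $R_{CM}$ of $w$ defined over $B$ (provided by Lemma~\ref{fixed_points_lemma}) as the source of either an ``extra'' fixed point mod $p$ or of a companion divisor defeating $p$-adic loneliness. First I would note that the discriminant of the defining polynomial $x^{3}-8x+10$ of $\alpha$ is $-4\cdot 163$, so $B$ is unramified at every odd prime $p\neq 163$. Under the hypothesis that $p$ is not inert in $B$, the splitting type of $p$ in $B$ is thus either $(1,1,1)$ or $(1,2)$, and in both cases there is a prime $\mathfrak{p}$ of $B$ above $p$ with residue degree one. The embedding $B\hookrightarrow \Q_p$ corresponding to $\mathfrak{p}$ realises one member of the Galois orbit of $R_{CM}$ as a point $R\in X(\Q_p)$; this $R$ remains a fixed point of $w$ (since $w$ is defined over $\Q$), and since its $j$-invariant lies in $B\setminus\Q$ we have $R\neq P_{CM}$ as points of $X(\overline{\Q})$.

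I would then split into cases on the reduction $\widetilde{R}\in X(\F_p)$. If $\widetilde{R}\neq \widetilde{P}_{CM}$, then $\widetilde{R}$ is a fixed point of $\widetilde{w}$ distinct from $\widetilde{P}_{CM}$, and condition~(ii) of Lemma~\ref{two_cond_lemma} holds at once. If instead $\widetilde{R}=\widetilde{P}_{CM}$, then $c_{0}+R$ and $c_{0}+P_{CM}$ are two distinct divisors in $X^{(2)}(\Q_p)$ sharing the same mod-$p$ reduction $\widetilde{c}_{0}+\widetilde{P}_{CM}$, so $c_{0}+P_{CM}$ fails to be $p$-adically lonely; by observation~(\ref{equally_lonely}) the same holds for $c_{\infty}+P_{CM}$, giving condition~(i). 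Either way, Lemma~\ref{two_cond_lemma} yields $[D_t],-[D_t]\in W_p$.

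For the final clause I would argue through the ring class field $H=K(\alpha)=K\cdot B$, which is Galois over $\Q$ of degree~$6$, with $\Gal(H/\Q)$ the generalised dihedral group on $\mathrm{Cl}(\OO)\cong\Z/3$, hence isomorphic to $S_{3}$. If $p$ is inert in $K$, its Frobenius in $\Gal(H/\Q)$ surjects onto $\Gal(K/\Q)$, so it has even order in $S_{3}$; the only such elements are transpositions, and a transposition fixes exactly one root of $x^{3}-8x+10$, producing splitting type $(1,2)$ in $B$, so that $p$ is not inert in $B$. The assertion that every odd prime $p<41$ is inert in $K=\Q(\sqrt{-163})$ is Rabinowitz's criterion applied to the prime-producing polynomial $x^{2}+x+41$ (which also witnesses $\mathrm{Cl}(\OO_K)=1$), and it can equally well be verified by a direct case-by-case computation of $\left(\tfrac{-163}{p}\right)$. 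The only mild subtlety will be the bookkeeping in Case~(i), ensuring that $c_{0}+R$ and $c_{0}+P_{CM}$ really are different points of $X^{(2)}(\Q_p)$; this follows from the fact that $R$ and $P_{CM}$ have distinct $j$-invariants, namely $j(\OO)$ and $j(\OO_K)$.
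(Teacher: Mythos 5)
Your proposal is correct and follows essentially the same route as the paper's proof: it produces a degree-one prime of $B$ above $p$, splits into cases according to whether $\widetilde{R}_{CM}=\widetilde{P}_{CM}$ to obtain condition (ii) or (i) of Lemma~\ref{two_cond_lemma}, and uses $\Gal(H/\Q)\simeq S_3$ to rule out $p$ being inert in both $K$ and $B$. The only cosmetic differences are that you justify unramifiedness via the discriminant $-4\cdot 163$ of $x^3-8x+10$ and invoke Rabinowitz's criterion for the final clause, where the paper uses the norm equation $a^2+163b^2=4p$ (or a direct check); both are valid.
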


\begin{proof}
    Since $p \notin \{ 2, 163 \}$, the prime $p$ does not ramify in $B$, and since $B$ is a number field of degree $3$, the fact that $p$ is not inert in $B$ implies that there exists a prime $\mathfrak{p} \mid p$ of $B$ of inertia degree $1$. We may therefore choose a representative for the Galois orbit of the point $R_{CM} \in X(B)$ (the fixed point of $w$ defined in Lemma~\ref{fixed_points_lemma}), which we will simply assume to be $R_{CM}$ itself, such that $\tilde{R}_{CM} \in X(\F_p)$. 

    The point $\tilde{R}_{CM} \in X(\F_p)$ is a fixed point of $\tilde{w}$. If $\tilde{R}_{CM} \neq \tilde{P}_{CM}$ then condition (ii) of Lemma~\ref{two_cond_lemma} holds, so suppose instead that $\tilde{R}_{CM} = \tilde{P}_{CM}$. Since $\mathfrak{p}$ has inertia degree $1$ and $p$ does not ramify in $B$, we may embed $B$ in $\Q_p$ and view $R_{CM} \in X(B) \subset X(\Q_p)$. Then $c_0 + R_{CM}$ and $c_0 + P_{CM}$ are different points in $X^{(2)}(\Q_p)$ that both reduce to $\tilde{c_0} + \tilde{P}_{CM}$. It follow that $c_0 + P_{CM}$ is not $p$-adically lonely, and condition (i) of Lemma~\ref{two_cond_lemma} holds by (\ref{equally_lonely}).

    Next, we prove that if $p$ is inert in $K$ then $p$ is not inert in $B$. Recall from Section~\ref{sec:difficulties} that $H$ denotes the ring class field of the order $\OO$. The field $H$ is Galois over $\Q$ with $\Gal(H/\Q)\simeq S_3$. We have $[H:K]=3$ and $[H:B] = 2$. Since there are no primes of inertia degree 6 in $H$, it follows that if $p$ is inert in $K/\Q$, then it cannot be inert in $B/\Q$. 

    Finally, one can either directly check that all odd primes $<41$ are inert in $K$, or apply the following theoretical argument to prove this. Since $K$ has class number $1$, all primes of $K$ are principal, so if a prime $p$ splits in $K$ we must have $p= \alpha \cdot \overline{\alpha}$ where $\alpha = (a+b\sqrt{-163})/2$ for some $a,b \in \Z$. It follows that $\mathrm{Norm}_{K/\Q}(\alpha)=p$, so $a^2+163b^2=4p$. Clearly $b\neq 0$, and it follows that $p\geq \frac{163}{4} > 40$.
\end{proof}

For each odd prime $p < 41$ we checked that $\mathrm{red}_p(P_{CM}) \neq \mathrm{red}_p(R_{CM})$. We also checked that for $3 < p < 41$ the points $c_0 + P_{CM}$ and $c_\infty + P_{CM}$ are $p$-adically lonely (and we expect this to be the case for $p = 3$, but the symmetric Chabauty criterion we apply fails to prove this). So by Lemma~\ref{lem:ram}, the reason for the failure of the sieve using primes $<41$ is due to the cubic fixed point $R_{CM} \in X(B)$. It is interesting how the existence of a cubic point on the curve creates an obstruction to the computation of its quadratic points. 

\begin{remark} 
    We take this opportunity to make a series of technical remarks.
    \begin{itemize}
        \item[(I)] Although any odd prime that is inert in $K$ is not inert in $B$, we note that if a prime splits in $K$, then it need not be inert in $B$. Indeed, the smallest prime that illustrates this behaviour is the prime $167$, which totally splits in $H$, and is therefore totally split in both $K$ and $B$.
        \item[(II)] The contents of Lemma~\ref{lem:ram} and the arguments used in its proof are related to \cite[Section~4]{ozman}.
        \item[(III)] The phenomena we have observed with $X = X_0(163)$ in this section also occur with the curves $X_0(43)$ and $X_0(67)$ (this is due to the fact that $\Q(\sqrt{-43})$ and $\Q(\sqrt{-67})$ are fields of class number $1$), but the analogous prime bound from Lemma~\ref{lem:ram} is smaller, namely $p < 11$ for $X_0(43)$ and $p < 17$ for $X_0(67)$. 
    \end{itemize}
\end{remark}


\section{Incorporating the prime \texorpdfstring{$p = 41$}{p = 41}} \label{sec:41}

In this section, we show how to work with the curve $X$ over the field $\F_{41}$ and we complete the proof of Theorem~\ref{mainthm}.

By Lemma~\ref{lem:35}, to prove \Cref{mainthm} it will suffice to show that for some prime $p$ of good reduction for $X$, we have that $[D_t]$ and $-[D_t]$ are not in $W_p$. By \Cref{lem:ram}, to have any chance of proving this, we must use a prime $p\geq 41$ (with $p \neq 163$) which is not inert in the field $B$. We use the smallest prime that satisfies this condition, namely $p=41$. 

The problem that immediately arises is that the computations in the Atkin--Lehner sieve involving the Jacobian of $X$, as implemented in \cite{akmnov}, become infeasible for such a ``large" $p$. The Jacobian of a genus $g$ curve $Y$ over $\F_p$ is usually represented as the image of a surjective map $Y^{(g)}\rightarrow J(Y)$ sending $D$ to $[D-D_0]$ for some fixed degree $g$ divisor $D_0$ (see \cite[Section~4]{poonen}). In our case ($p=41$, $g=13$), this would require computations over a field of order $41^{13}> 9.2 \cdot 10^{20}$, and hence even the construction of the Jacobian over $\F_{p}$ becomes unfeasible. 

However, we will circumvent this issue by observing that we do not actually need to fully construct $J(\mathbb{F}_p)$. We fix $p = 41$ and first verify that the points $c_0+P_{CM}$ and $c_{\infty}+P_{CM}$ are $p$-adically lonely, meaning that $\tilde{c}_0 + \tilde{P}_{CM}$ and $\tilde{c}_\infty + \tilde{P}_{CM}$ do not lie in the set {$S_p$}. Then, in order to prove that  $[D_t], -[D_t] \notin W_p$, it will suffice to prove that for all  $Q \in X^{(2)}(\F_p) \backslash \{\tilde{c}_0 + \tilde{P}_{CM}, \tilde{c}_\infty + \tilde{P}_{CM}\}$ we have 
\begin{equation} \label{image_cond}
    ((1-\tilde{w}) \circ \tilde{\iota})(Q) \notin \{\mathrm{red}_p([D_t]), \mathrm{red}_p(-[D_t]) \}.
\end{equation}
Now, in order to verify for a given point $Q \in X^{(2)}(\F_p)$ whether $((1-\tilde{w}) \circ \tilde{\iota})(Q) \in J(\F_p)$ is equal to $\mathrm{red}_p([D_t])$ or $\mathrm{red}_p(-[D_t])$, instead of computing the image of the point $Q$ under the map $(1-\tilde{w}) \circ \tilde{\iota}$, it is enough to check whether the divisor $(1-\tilde{w})(Q)$ is linearly equivalent to one of $\tilde{D}_t$ or $-\tilde{D}_t$. This is because $[(1-\tilde{w})(Q)] = ((1-\tilde{w}) \circ \tilde{\iota})(Q)$. This is essentially a computation over the finite field of order $41^2$, instead of a computation involving a field of order $41^{13}$, and it is a computation that is perfectly feasible in \texttt{Magma}. The set $X^{(2)}(\F_p)$ consists of $1598$ degree $2$ divisors, and the total computation time for this step was just over 20 minutes running on a 2200 MHz AMD Opteron. We checked that (\ref{image_cond}) holds, thus proving the following lemma.

\begin{lemma} \label{lem:41}
    The set $W_{41}$ does not contain $[D_t]$ and $-[D_t]$. 
\end{lemma}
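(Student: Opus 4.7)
The plan is to execute the strategy sketched in the paragraph immediately preceding the statement of the lemma. First, I would verify that the two distinguished divisors $c_0+P_{CM}$ and $c_\infty+P_{CM}$ are $41$-adically lonely, by running the symmetric Chabauty criterion of \cite[Section~3.4]{akmnov} at $p=41$; this is of the same flavour as the loneliness checks already carried out for $3<p<41$ in Section~\ref{sec:failure}. By (\ref{equally_lonely}) it actually suffices to check one of the two, but in practice one verifies both. Having done so, the reductions $\tilde c_0+\tilde P_{CM}$ and $\tilde c_\infty+\tilde P_{CM}$ are removed from $S_{41}$, and these are the only known degree $2$ divisors whose image under $(1-w)\circ\iota$ lies in $\{[D_t],-[D_t]\}$ (by Lemma~\ref{quotient_points_lemma} combined with (\ref{D_t_eq})).

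With that reduction in hand, by the discussion after (\ref{W_p}), showing $[D_t],-[D_t]\notin W_{41}$ reduces to verifying (\ref{image_cond}) for every $Q$ in $X^{(2)}(\F_{41})\setminus\{\tilde c_0+\tilde P_{CM},\tilde c_\infty+\tilde P_{CM}\}$. The essential idea is to avoid constructing $J(\F_{41})$ (which would require arithmetic over $\F_{41^{13}}$) and instead test the condition directly at the level of divisors: one has $((1-\tilde w)\circ\tilde\iota)(Q)=\red_{41}(\pm[D_t])$ precisely when the degree $0$ divisor $(1-\tilde w)(Q)\mp\tilde D_t$ is principal, and this can be decided by a single Riemann--Roch computation, since for a degree $0$ divisor $E$ on a curve we have $E\sim 0$ if and only if $\dim L(E)\geq 1$. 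These computations take place on the explicit model of $X$ in $\mathbb{P}^{12}$, over $\F_{41}$ or at worst $\F_{41^2}$, and are well within Magma's capabilities.

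Concretely, I would enumerate $X^{(2)}(\F_{41})$ by running through all unordered pairs of $\F_{41}$-points on $X$ together with all Galois-stable pairs of conjugate points in $X(\F_{41^2})$, giving the $1598$ effective divisors mentioned in the text; then discard the two exceptional reductions, and for each of the remaining $Q$ compute $\dim L\bigl((1-\tilde w)(Q)-\tilde D_t\bigr)$ and $\dim L\bigl((1-\tilde w)(Q)+\tilde D_t\bigr)$, checking that both are $0$. If every $Q$ passes this test, then (\ref{image_cond}) holds, so neither $\red_{41}([D_t])$ nor $\red_{41}(-[D_t])$ is in the image of $(1-\tilde w)\circ\tilde\iota$ restricted to $S_{41}$, and the lemma follows from the definition (\ref{W_p}) of $W_{41}$.

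The main obstacle is purely computational rather than conceptual: one must call Magma's Riemann--Roch machinery several thousand times on a high-genus model, and organise the enumeration of $X^{(2)}(\F_{41})$ efficiently. As reported in the text, this runs in about $20$ minutes on the authors' hardware, so the scale is manageable; the only theoretical input needed beyond what is already developed in Sections~\ref{sec:difficulties}--\ref{sec:failure} is the elementary observation that linear equivalence of divisors can be tested by a single Riemann--Roch computation, which is exactly what allows us to bypass building $J(X_{\F_{41}})$.
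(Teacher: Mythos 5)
Your proposal follows essentially the same route as the paper: verify that $c_0+P_{CM}$ and $c_\infty+P_{CM}$ are $41$-adically lonely, then enumerate the $1598$ points of $X^{(2)}(\F_{41})$ and check for each remaining $Q$ that $(1-\tilde w)(Q)$ is not linearly equivalent to $\pm\tilde D_t$ by a direct divisor computation over $\F_{41^2}$, bypassing the construction of $J(\F_{41})$. The only detail you add beyond the paper's exposition is the standard remark that linear equivalence of degree $0$ divisors is decided by a Riemann--Roch dimension computation, which is exactly how such a check is implemented.
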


We may now simply collate the results of Section~2 with this lemma to prove Theorem~\ref{mainthm}.

\begin{proof}[Proof of Theorem~\ref{mainthm}]
    Combining Lemmas~\ref{lem:35} and \ref{lem:41} proves that  for $\mathcal{P} = \{3,5,41\}$ we have 
    \[ 
        \bigcap_{p \in \mathcal{P}}W_p=\{0 \} \subset J(\Q)_\mathrm{tors}.
    \] 
    As discussed in Section~2, by \cite[Section~3.4]{akmnov}, this proves that every quadratic point on $X$ either arises as the pullback of a rational point on $X/w$ via $\pi$, or is a fixed point of $w$. Applying Lemmas~\ref{quotient_points_lemma} and \ref{fixed_points_lemma} completes the proof of the theorem.
\end{proof}

\begin{remark} \label{end_rem}
    In this section, we worked directly with divisors to check that certain points in $J(\F_p)$ were not equal. By considering the divisors $m \cdot D_t$ for $1 \leq m \leq 27 = \#J(\Q)_{\mathrm{tors}}$, we could have in fact computed the image of a point $Q \in X^{(2)}(\F_p)$ under the map $(1-\tilde{w}) \circ \tilde{\iota}$. Using this idea, for any given prime $p$, we could actually compute the set $W_p$ without fully constructing the Jacobian $J(\F_p)$. This offers a significant computational speed-up when working with larger primes. More generally, this technique of directly working with divisors could be applied to any type of Mordell--Weil sieve, and has the potential to notably reduce computation time and potentially open new avenues for the computation of low-degree points on curves.
\end{remark}


\bibliographystyle{siam}
\bibliography{references}

\end{document}